\setlist[enumerate]{leftmargin=.5in}
\setlist[itemize]{leftmargin=.5in}
\crefname{hypothesis}{Hypothesis}{Hypotheses}
\title{On the lifting and reconstruction of nonlinear systems with multiple invariant sets\thanks{Submitted to the editors DATE.
\funding{AFOSR}}}
\author{
Shaowu Pan\thanks{Department of Aerospace Engineering, University of Michigan, Ann Arbor, MI (\email{shawnpan@umich.edu}, \email{kdur@umich.edu} )}
\and
Karthik Duraisamy\footnotemark[2]
}
\newcommand{\bx}{x}
\newtheorem{example}{Example}
\begin{document}

\maketitle

\begin{abstract}
The Koopman operator provides a linear perspective on non-linear dynamics by focusing on the evolution of observables in an invariant subspace. Observables of interest are typically linearly reconstructed from the Koopman eigenfunctions. Despite the broad use of Koopman operators over the past few years, there exist some misconceptions about the applicability of Koopman operators to dynamical systems with more than one disjoint invariant sets (e.g., basins of attractions from isolated fixed points). In this work, we first provide a simple explanation for the mechanism of linear reconstruction-based Koopman operators of nonlinear systems with multiple disjoint invariant sets. Next, we discuss the use of discrete symmetry among such invariant sets to construct Koopman eigenfunctions in a data efficient manner. Finally, several numerical examples are provided to illustrate the benefits of exploiting symmetry for learning the Koopman operator. 
\end{abstract}

\begin{keywords}
  Koopman decomposition,  linear embedding,  dynamical systems, symmetry
\end{keywords} 

\begin{AMS}
  37M10, 37M25, 47B33, 62F15
\end{AMS}

\section{Introduction}
Originally introduced in 1931 by Bernard Koopman~\cite{koopman1931hamiltonian} in the context of Hamiltonian dynamical system, the Koopman operator has seen resurgent interest in the understanding, prediction, and control of dynamical systems in diverse fields such as fluid dynamics~\cite{mezic2013analysis}, quantum mechanics~\cite{klus2022koopman}, neuroscience~\cite{marrouch2020data}, and power grids~\cite{susuki2016applied}. 
The idea of Koopman operator is to lift the system state of nonlinear dynamics to a much higher dimensional space where the evolution becomes linear. Generally speaking, such a space can be infinite-dimensional. However, from the eyes of practicioners, the most attractive aspect is to find a non-trivial, finite-dimensional Koopman invariant subspace where one can linearly reconstruct the system state of the original non-linear dynamics~\cite{budivsic2012applied,mezic2013analysis}. Due to the formidable difficulty in directly deriving Koopman eigenfunctions from the governing equations, learning Koopman eigenfunctions from data has been a long pursuit over the past decade~\cite{otto2021koopman,bevanda2021koopman}. 
%

Recall that any linear dynamical system can only contain up to one \emph{asymptotically} stable equilibrium. Meanwhile, nonlinear systems are well-known to have more than one isolated equilibrium. In a broader sense, a natural follow-up question for those nonlinear systems with multiple disjoint invariant sets would be \emph{``How can one find a set of nonlinear functions to reconcile multiple separated equilibria within a framework of linear systems?''} 
In fact, Brunton et al.~\cite{brunton2016koopman} argued against the possibility of purely using \emph{continuous} functions to lift nonlinear systems with multiple asymptotically stable equilibria into a finite-dimensional space that spans the system state~\footnote{Note that if ``multiple fixed points'' are considered instead, this statement does not hold. Consider a counter-example $\dot{x} = \begin{bmatrix} -1 & 0 & 0\\
0 & -1 & 0 \\
0 & 0 & 0
\end{bmatrix} x$. A Koopman invariant subspace with identity mapping can be constructed for such system with infinite number of fixed points. We would like to thank Matthew Kvalheim for pointing this out to us. }. 

However, empirical evidence from  early data-driven studies~\cite{li2017extended,otto2019linearly,williams2015data} has shown the success of lifting into a linear system for  non-linear systems using  examples such as that of the unforced Duffing oscillator, where there are three separated equilibria in total. 
From the theoretical perspective, a series of independent studies including the first author's thesis have indicated the use of \emph{discontinuous} functions to reconcile this issue~\cite{bakker2019learning,nandanoori2022data,Liu2021,pan2021robust}. Specifically, Bakker et al.~\cite{bakker2019learning} showed an example of piecewise affine discontinuous scalar dynamics where a Koopman invariant subspace can be constructed with state $x$ and the discontinuous functions that indicate the corresponding domains of attraction.
Pan~\cite{pan2021robust} explicitly showed the use of discontinuous indicator functions to stitch compact Koopman invariant subspaces on each domain of attraction in reconciling the dilemma between the empirical success of lifting into a linear system for arbitrary nonlinear systems with multiple attractors, and further discussed the use of symmetry to further minimize the dimension of the lifted system. In simple terms, for nonlinear systems with multiple disjoint invariant sets, one can find Koopman eigenfunctions that linearly span the system state in the \emph{weak} sense. This explains the previous success from the practitioners~\cite{li2017extended,otto2019linearly}. 
Concurrently, Nandanoori et al.~\cite{nandanoori2022data} stitched local Koopman operators from disjoint invariant sets to obtain a global Koopman operator and discussed the role of symmetric invariant sets on Koopman operator. 
Liu et al.~\cite{Liu2021} further strengthened the insight of Brunton et al.~\cite{brunton2016koopman} by rigorously denying the impossibility of one-to-one continuous immersions of nonlinear system with multiple omega-limit sets on a path-connected forward invariant subset. For other discussions on the existence of immersion on nonlinear systems, interested readers are referred to the recent work by Kvalheim and Arathoon~\cite{kvalheim2023linearizability} together with the work from Liu et al.~\cite{Liu2021,liu2023properties}.

In this paper, we first summarize and then extend the results from the first author's thesis~\cite{pan2021robust} to nonlinear systems with disjoint symmetric invariant sets. Furthermore, we perform numerical experiments to demonstrate the benefit of exploiting known symmetries to further improve generalization performance of learned Koopman operators in predictive modeling.
The rest of the paper is organized as follows. In section \ref{sec:background}, we will review the basic concepts of the Koopman operator. In section \ref{sec:multiple_attractor}, we will discuss the use of discontinuous functions to reconstruct the system state in a weak sense.  In section \ref{sec:symmetry}, we will discuss how to exploit the symmetry of dynamical systems in the context of learning Koopman operator. In section \ref{sec:results}, numerical experiments will be shown to demonstrate the benefits of embedding the discussed symmetry in learning Koopman eigenfunctions. Finally, conclusions are drawn in section \ref{sec:conclusion}.



\section{Background on Koopman operators}
\label{sec:background}

Consider the autonomous dynamical system   
\begin{equation}
\label{eq:nldy}
\dot{x}=F(x),
\end{equation}
where $x \in \mathbb{R}^n$ is the state vector, and $F: \mathbb{R}^n \rightarrow \mathbb{R}^n$ is a smooth vector field. The Koopman operator $\mathcal{K}_t: \mathcal{F} \mapsto \mathcal{F}$ considers the dynamics of observables on the measure space $(\mathcal{M}, \mathfrak{F}, \mu)$ where the $\sigma$-algebra $\mathfrak{F}$ is the family of all subsets of $\mathcal{M}$, $\mu$ is the measure, such that for an observable \textcolor{black}{in a proper function space $\mathcal{F}$, e.g., $\mathcal{F} = L^2(\mathcal{M}, \mu)$}, $\forall g \in \mathcal{F}$, $g: \mathcal{M} \mapsto \mathbb{C}$, $\mathcal{K}_t g \triangleq g \circ S_t, $ where $S_t(x_0)\triangleq S(x_0, t)$ is the flow map.  
For a given observable $g$, our focus lies on the minimal \textcolor{black}{Koopman invariant subspace that spans $g$.} 
Koopman analysis involves identifying the Koopman eigenfunctions that span such a subspace and their corresponding eigenvalues and eigenfunctions~\cite{mezic1999method}.
The eigenfunctions $s(x)$ and eigenvalues $\lambda$ of the Koopman operator satisfy $\mathcal{K}_t s(x) = \lambda s(x)$. 

The most important use of Koopman operator is to linearize nonlinear systems beyond the vicinity of equilibrium from the Hartman-Grobman theorem~\cite{hirsch2012differential}. As first pointed out by Lan and Mezi\'c~\cite{lan2013linearization}, the Koopman operator is closely related to a global version of the Hartman-Grobman theorem, which under mild conditions states the existence of a continuously differentiable mapping $a: \mathbb{R}^{n} \mapsto \mathbb{R}^n$ and $a(x)=x + h(x)$ that linearizes the nonlinear system in Eq.~\ref{eq:nldy} on the entire basin of attraction. Because $h(x)$ is likely \emph{nonlinear}, one has to reconstruct system state $x$ via an inverse of the nonlinear mapping $a(\bx)$. 
Note that this makes the global Hartman-Grobman theorem~\cite{lan2013linearization} different from Koopman operator theory,~\cite{mezic2013analysis} which by definition requires a function space that spans state $\bx$. 
Nonlinear reconstructions, have however, become quite popular in recent years~\cite{takeishi2017learning,otto2019linearly}. This is predicated on the justification that nonlinear reconstruction is more expressive than linear reconstruction and that they can accommodate complex disjoint invariant sets. It is, however, equivalent to abandoning the concept of \emph{Koopman modes} for the observables of interest~\cite{otto2019linearly}. In other words, this is equivalent to removing the identity mapping from the finite dimensional Koopman invariant subspace. Linear reconstruction is still desirable in the context of model-based control~\cite{kaiser2017data,korda2016linear} since it directly connects the non-linear control with linear counterpart. 
However, systems with multiple invariant sets naturally arise in many physical systems from bifurcation~\cite{hirsch2012differential}.




In practice, one often seeks eigenfunctions and eigenvalues of the Koopman operator $\mathcal{K}_{\Delta t}$ given a set of uniformly sampled trajectories of the nonlinear dynamical systems. Specifically, we are interested in seeking a finite dimensional Koopman invariant subspace $\mathcal{F}_D \subset \mathcal{F}$  with $D $ linearly independent observations, defined as
\begin{equation}
\mathcal{F}_D \triangleq \textrm{span}\{ \phi_1, \ldots, \phi_D \} \subset \mathcal{F},
\end{equation}
where 
$\phi_i \in \mathcal{F}$, $i=1,\ldots, D$. 
Correspondingly, the observation vector $\Phi(x)$ is defined as
\begin{equation}{\label{eq:nlob}}
\Phi (x) \triangleq \begin{bmatrix}
\phi_1(x) & \phi_2(x) & \ldots & \phi_D(x)
\end{bmatrix}^\top.
\end{equation}
Given two sequential system states $x_{n} \triangleq x(t_n)$ and $x_{n} \triangleq x(t_n +\Delta t)$, we have 
\begin{equation}
\mathcal{K}_{\Delta t} \Phi(x_n) = \Phi(x_{n+1}) = K \Phi(x_n),
\end{equation}
where $K\in \mathbb{R}^{D\times D}$ is the finite-dimensional representation of Koopman operator $\mathcal{K}_{\Delta t}$. The eigendecomposition of $K$ leads to $K = P \Lambda P^{-1}$, which leads to the definition of eigenfunctions vector $\Psi$, 
\begin{equation}
\Psi(x_{n+1}) = P^{-1} \Phi(x_{n+1}) = \Lambda P^{-1} \Phi(x_n) = \Lambda \Psi(x_{n}).
\end{equation}
Lastly, if we can reconstruct $x$ from $\Phi(x)$, i.e., there exists a matrix $C$ such that $x = C\Phi$, then we can define the Koopman modes that satisfy
\begin{equation}
    x = B \Psi(x).
\end{equation}

It is worthwhile re-emphasizing that while linear reconstructions are desirable in applications such as optimal control~\cite{kaiser2017data}, nonlinear reconstruction can be more expressive and can further reduce the dimension of latent space~\cite{li2017extended,otto2019linearly,lusch2017deep,pan2020physics}.



%

\section{Linear reconstruction for nonlinear systems with multiple disjoint invariant sets}
\label{sec:multiple_attractor}

As previously indicated, it is impossible to linearly reconstruct the system state over the entire phase space with only a finite number of \emph{continuous observables} when the nonlinear system admits multiple asymptotically stable equilibria. 

Here we start with a motivating example to illustrate that, when discontinuous observables are allowed, one can reconstruct the system state in the weak sense. Consider the well-known unforced duffing system:
\begin{align}
\label{eq:duffing_eq}
\dot{x_1} &=  x_2, \\
\dot{x_2} &= -\delta x_2 - x_1(\beta + \alpha x_1^2),
\end{align}
where $\delta = 0.5$, $\beta = -1$, $\alpha = 1$. As shown in Fig.~\ref{fig:lifting_duffing}, the extraction of the third ``indicator" Koopman eigenfunction can be interpreted as  the ability of the learning algorithm to discern initial conditions that result in trajectories converging to different fixed points at final time. In other words, states corresponding to different basins of attraction end up linearly evolving on ``parallel" subspaces.

\begin{figure*}[htbp]
 \centering
 \includegraphics[width=\linewidth]{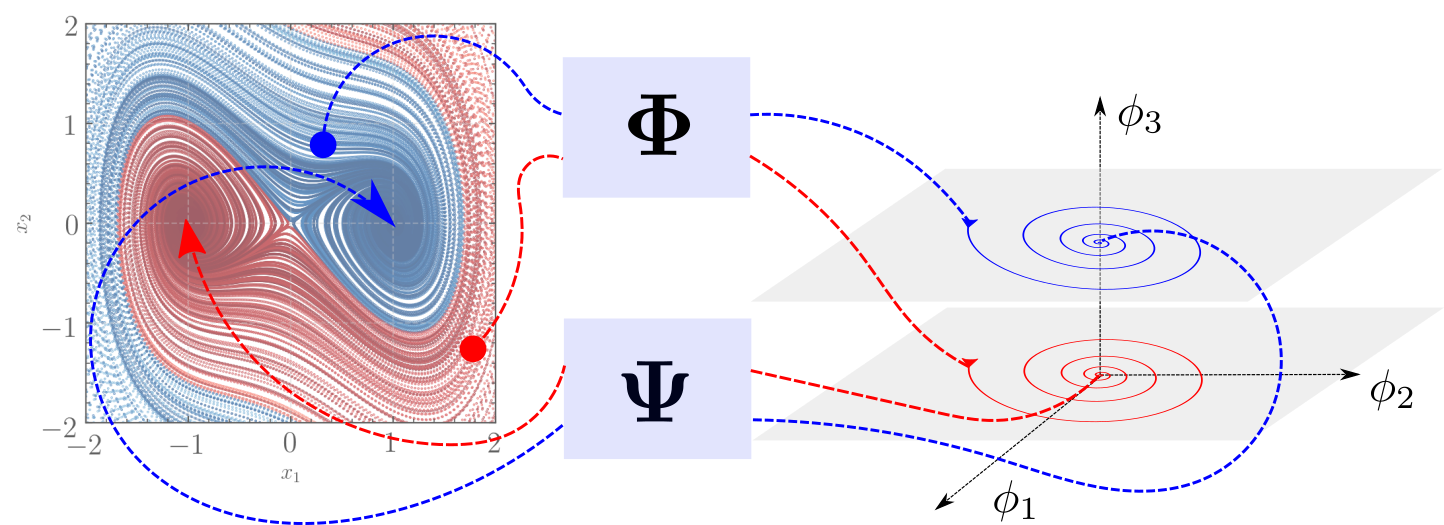}
 \caption{Mechanism of lifting with $\Phi(x)$ into higher dimension for Duffing oscillator, which has two fixed asymptotically stable points (while the third one is unstable at the origin). Note that the above observables $\phi_1,\phi_2,\phi_3$ in the figure is only for the ease of illustration since it is just \emph{one} of the acceptable observables rather than the \emph{one} we obtained in the actual training. Reconstruction is performed via $\Psi(x)$, which can be linear or nonlinear.}
 \label{fig:lifting_duffing}
\end{figure*}

Indeed, the above observation can be extended to the general case with multiple disjoint invariant sets. The overarching idea that has concurrently appeared in Nandanoori et al.~\cite{nandanoori2022data} and the first author's thesis~\cite{pan2021robust} is to prove that the stitched state $\widehat{x} = \sum_{j=1}^{J} \chi_{\mathcal{M}_j} (x)\sum_{l=1}^{D_j}  {w}_{j,l} \phi_{j,l}(x)$ is the same as $x$ almost everywhere. 
In order to achieve this, let's start with the following three assumptions: 
\begin{enumerate}
\item we assume there are $J \in \mathbb{N}$ disjoint invariant sets in $\mathcal{M}$ for the nonlinear system defined in the Eq.~\ref{eq:nldy}. Moreover, for $j=1,\ldots,J$, the union of disjoint invariant sets $\Sigma = \bigcup_{j=1}^{J} \mathcal{M}_j \subset \mathcal{M}$ where $\mathcal{M}_j \subset \mathcal{M}$ and $\Sigma$ has full measure on $\mathcal{M}$\footnote{One could look for an alternate partitioning for measure-preserving continuous systems on a compact domain~\cite{mezic1999method}. Due to Birkhoff's pointwise ergodic theorem, one can guarantee that such a partition covers almost the entire   manifold.}. 

\item we assume there exists a $D_j$-dimensional Koopman invariant subspace spanned by $D_j$ independent real-valued functions $\phi_{j,1},\ldots,\phi_{j,D_j}$, i.e., 
\begin{equation}
    {\mathcal{F}}_{D_j}=\textrm{span}\{{\phi}_{j,1},\ldots, {\phi}_{j,D_j}\},
\end{equation}
with $D_j \ge N$ within any $j$-th invariant set\footnote{Note that if such invariant set is a domain of attraction, then the existence of nontrivial Koopman eigenfunctions has been proved by~\cite{lan2013linearization} under mild conditions.}.

\item we assume that 
\begin{equation}
\forall x \in \mathcal{M}_j, \quad \sum_{l=1}^{D_j} {w}_{j,l} {\phi_{j,l}}({x}) = {x}.
\end{equation}

 \end{enumerate}

With the above three assumptions, we can construct the following observable vector ${\Phi}$ in $\mathcal{F}_{D}$, where $D = \sum_{j=1}^{J} D_j$, 
\begin{equation}
\Phi(x) = 
\begin{bmatrix}
 \chi_{\mathcal{M}_1}(x) \phi_{1,1}(x)
& \ldots & \chi_{\mathcal{M}_J}(x) \phi_{J,D_J}(x)
\end{bmatrix}^\top,
\end{equation}
where $\chi_{\mathcal{M}}: \mathcal{M} \mapsto \{0,1\}$ is the indicator function defined as, 
  \begin{equation}
    \chi_{\mathcal{M}}(x)=
    \begin{cases}
      1, & \text{if}\ x \in \mathcal{M}, \\
      0, & \text{otherwise}.
    \end{cases}
  \end{equation}
In fact, such an observable vector $\Phi$ induces a weak linear reconstruction of $x$. To see this, we can replace $x$ with $\widehat{x}$, which is a linear combination of each component of ${\Phi}$ almost everywhere, i.e., $\widehat{x} = \sum_{j=1}^{J} \chi_{\mathcal{M}_j} (x)\sum_{l=1}^{D_j}  {w}_{j,l} \phi_{j,l}(x)$, such that we can show that $x$ and $\widehat{x}$ are the same almost everywhere, 
\begin{eqnarray}
& \lVert x - \widehat{x} \rVert^2_{\mathcal{F}} = 
\bigg\lVert x -  \sum_{j=1}^{J} \chi_{\mathcal{M}_j} (x)(\sum_{l=1}^{D_j}  {w}_{j,l} \phi_{j,l}(x)) \bigg\rVert^2_{\mathcal{F}} \nonumber \\
&= \int_{\mathcal{M}} \bigg \lVert x - \sum_{j=1}^{J} \chi_{\mathcal{M}_j}(x) (  \sum_{l=1}^{D_j}  {w}_{j,l} \phi_{j,l}(x)) \bigg \rVert^2_2 d\mu \nonumber \\
&= \sum_{j=1}^J \int_{\mathcal{M}_j} \bigg \lVert x  -   \sum_{l=1}^{D_j}  {w}_{j,l} \phi_{j,l}(x) \bigg \rVert^2_2 d\mu =0.
\end{eqnarray}

To summarize, one can easily linearly reconstruct the system state in the presence of multiple disjoint invariant sets by considering globally \emph{discontinuous} observables, which leads to globally \emph{discontinuous} eigenfunctions~\cite{bakker2019learning}. 


\section{Role of symmetry}
\label{sec:symmetry}

Here we consider nonlinear dynamics with discrete symmetries, which is well-defined through the concept of equivariant dynamical systems~\cite{field2009symmetry}.
\begin{definition}[$\Gamma$-equivariant]
Given a continuous-time dynamical system, $\dot{x}=F(x)$, $x \in \mathcal{M}$, we call the system is $\Gamma$-equivariant with respect to an action group $\Gamma$ defined on $\mathcal{M}$ if 
\begin{equation}
F(\gamma x(t)) = \gamma F(x(t)), 
\end{equation}
$\forall x \in \mathcal{M}$ and $\gamma \in \Gamma$.
\end{definition}
\begin{remark}
Note that the time discretization of the system also satisfies the equivariance, i.e., if $\{x_i\}_{i=1}^{\infty}$ is a trajectory of the system, so does $\{\gamma x_i \}_{i=1}^{\infty}$.     
\end{remark}

\begin{example}[Unforced Duffing Oscillator]
\label{ex:udo}
The unforced duffing oscillator is a $\mathbb{Z}_2$-equivariant dynamical system. It can be seen from the following equation,
\begin{equation}
    \gamma_\rho F(x) = \begin{bmatrix}
        -x_2 \\
        \delta x_2 + x_1(\beta + \alpha x_1^2)
    \end{bmatrix}
    = F(\gamma_\rho x),
\end{equation}
where $\gamma_\rho \in \Gamma_\rho$ is the matrix representation of the action $\gamma \in \Gamma$, 
\begin{equation}
\gamma_\rho =  \begin{bmatrix}
    -1 & 0 \\
    0 & -1
\end{bmatrix}.
\end{equation}
\end{example}

\begin{remark}
Note that $\Gamma$-equivariance only implies  symmetry of the vector fields without affecting the actual number of invariant sets. For example, the Lorenz system is a $\mathbb{Z}_2$-equivariant while there is only one strange attractor. 
\end{remark}

\begin{remark}
However, if a $\Gamma$-equivariant dynamical system does have multiple disjoint invariant sets, then the symmetry will be also shared across disjoint invariant sets. 
\end{remark}

For $\Gamma$-equivariant dynamical systems with multiple disjoint invariant sets under mild conditions, we can have a Koopman mode decomposition for the entire global phase space without data from the entire phase space. 
\begin{theorem}
\label{thm:1}
Given a $\Gamma$-equivariant discrete-time dynamical system $x_{n+1}=F(x_n)$ defined on a manifold $\mathcal{M}$, $n \in \mathbb{N}$ with $J\in \mathbb{N}$ disjoint invariant sets $\{\mathcal{M}_j\}_{j=1}^{J}$ of which the union has full measure in $\mathcal{M}$, 
if for each set $\mathcal{M}_j$, there exists some $\gamma_j \in \Gamma$ such that $\mathcal{M}_j = \gamma_j\mathcal{M}_1$, and if there exists a finite-dimensional Koopman invariant subspace of functions spanned by observation vector $\Phi(x)$ restricted on $\mathcal{M}_1$ that linearly reconstructs the system state $x$ with measurement matrix $C$, then the $l$-step evolution of given initial condition $x_{n} \in \mathcal{M}$ can be shown almost everywhere as,
\begin{equation}
\label{eq:decompose}
x_{n+l} = \sum_{j=1}^{J} \chi_{\mathcal{M}_1}( \gamma_j^{-1} x_n) \gamma_j C K^l \Phi \left( \sum_{j=1}^{J} \chi_{\mathcal{M}_1}(\gamma_j^{-1}  x_n) \gamma_j^{-1} x_n  \right), 
\end{equation}
where $\chi_{\mathcal{M}_1}$ is the indicator function for $\mathcal{M}_1$ and $K$ is the finite-dimensional representation of Koopman operator given basis $\Phi(x)$. 
\end{theorem}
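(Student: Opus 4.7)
The plan is to reduce everything to the dynamics on the base set $\mathcal{M}_1$ via the group action, apply the hypothesized Koopman linear representation there, and then push forward by $\gamma_j$. The identity in Eq.~\ref{eq:decompose} merely packages this case analysis into a single expression using indicators, exactly as in the previous weak-reconstruction argument of Section~\ref{sec:multiple_attractor}.

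First, I would set aside the measure-zero set $\mathcal{M}\setminus \Sigma$ and work with $x_n\in \Sigma=\bigcup_j \mathcal{M}_j$. Since the $\mathcal{M}_j$ are disjoint and each equals $\gamma_j \mathcal{M}_1$, for almost every $x_n$ there is a unique index $j^\ast$ with $x_n\in\mathcal{M}_{j^\ast}$, equivalently $\gamma_{j^\ast}^{-1}x_n\in\mathcal{M}_1$, and $\chi_{\mathcal{M}_1}(\gamma_j^{-1}x_n)=\delta_{jj^\ast}$. Consequently the inner sum in Eq.~\ref{eq:decompose} collapses to the single point $y_n\triangleq\gamma_{j^\ast}^{-1}x_n\in\mathcal{M}_1$, and the outer sum reduces to $\gamma_{j^\ast}CK^l\Phi(y_n)$. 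So proving Eq.~\ref{eq:decompose} is equivalent to proving $x_{n+l}=\gamma_{j^\ast}CK^l\Phi(\gamma_{j^\ast}^{-1}x_n)$ a.e.

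Second, I would use $\Gamma$-equivariance to show that the trajectory issued from $y_n$ stays in $\mathcal{M}_1$ and mirrors the trajectory from $x_n$. Because $F(\gamma y)=\gamma F(y)$ for all $\gamma\in\Gamma$, the remark following the definition gives $y_{n+l}=\gamma_{j^\ast}^{-1}x_{n+l}$. Invariance of $\mathcal{M}_1$ (as an invariant set) forces $y_{n+l}\in\mathcal{M}_1$ for all $l\in\mathbb{N}$, so we may legitimately apply the assumed linear reconstruction on $\mathcal{M}_1$ to the orbit $\{y_n, y_{n+1},\dots\}$: $y_{n+l}=C\,\mathcal{K}_{\Delta t}^l\Phi(y_n)=CK^l\Phi(y_n)$, where the second equality uses that $K$ is the matrix representation of the Koopman operator on the invariant subspace spanned by $\Phi$.

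Third, I would apply $\gamma_{j^\ast}$ to both sides to get $x_{n+l}=\gamma_{j^\ast}y_{n+l}=\gamma_{j^\ast}CK^l\Phi(\gamma_{j^\ast}^{-1}x_n)$, and finally rewrite the selection of $j^\ast$ with the indicator sums as in the statement, noting that the double sum is well-defined a.e.\ because only one term is nonzero. The main obstacle is conceptual rather than technical: one must be careful that the linearity of $K$ and $C$ commutes correctly with the group action only because $\Phi$ is evaluated at a point in $\mathcal{M}_1$ (where the linear reconstruction is assumed to hold), and that the boundary set between the $\mathcal{M}_j$'s, where the indicators are ambiguous, is exactly the null set already discarded; thus equality holds almost everywhere but not pointwise, which is precisely the weak-reconstruction viewpoint inherited from Section~\ref{sec:multiple_attractor}.
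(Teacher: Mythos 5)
Your proposal is correct and follows essentially the same route as the paper's proof: reduce to the unique $j^\ast$ with $x_n\in\mathcal{M}_{j^\ast}$, map to $\mathcal{M}_1$ by $\gamma_{j^\ast}^{-1}$, apply $x'_{n+l}=CK^l\Phi(x'_n)$ there, push forward by $\gamma_{j^\ast}$, and repackage the case selection with indicators using $\chi_{\mathcal{M}_j}=\chi_{\mathcal{M}_1}\circ\gamma_j^{-1}$. The only difference is cosmetic: you spell out explicitly the equivariance step ($y_{n+l}=\gamma_{j^\ast}^{-1}x_{n+l}$) that the paper invokes implicitly when mapping the orbit back to $\mathcal{M}_1$.
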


\begin{proof}
Since the union of disjoint sets $\cup_{j=1}^{J} \mathcal{M}_j $ has full measure in $\mathcal{M}$,  for almost any $x_n \in \mathcal{M}$, there exists a  unique $j' \in \{1,\ldots,J\}$ such that $x_n \in \mathcal{M}_{j'}$. Since for any $\mathcal{M}_j$, there exists $\gamma_j \in \Gamma$ such that $\gamma_j \mathcal{M}_1 = \mathcal{M}_j$,  for the previous $x_n \in \mathcal{M}_{j'}$, we can always find $x'_n \in \mathcal{M}_1$ such that $\gamma_{j'} x'_n = x_n$. 

Since $\Gamma$ is a group, $\gamma_{j'}^{-1} \in \Gamma$  for any $x_n \in \mathcal{M}_{j'}, \gamma_{j'}^{-1} x_n = x_n' \in \mathcal{M}_1$. Note that for any $j'\in \{1,\ldots,J\}$, $\mathcal{M}_{j'}$ is an invariant set. Thus for any $l \in \mathbb{N}$, $x_{n+l} = F^{(l)}(x_n)$ can be mapped by $\gamma_{j'}^{-1}$ to $x_{n+l}' \in \mathcal{M}_1$. Since we have a finite-dimensional Koopman invariant subspace $\mathcal{M}_1$ with the observation vector $\Phi(x)$, measurement matrix $C$, and finite-dimensional representation of the Koopman operator $K$, we can linearize the nonlinear dynamics on $\mathcal{M}_1$ as 
$x_{n+l}' = CK^l \Phi(x_{n}')$. However,  in order to extend this relation to $\cup_{j=1}^{J} \mathcal{M}_j$, we need to know $j'$ a priori. One way is to consider the use of indicator functions for each invariant set. Then for any $x_n \in \cup_{j=1}^{J} \mathcal{M}_j$, we have $\sum_{j=1}^{J}  \chi_{\mathcal{M}_j}(x_n) \gamma_j^{-1} x_n \in \mathcal{M}_1$. 

Hence, $x_{n+l}' = CK^l \Phi(x_{n}') =  CK^l \Phi \left( \sum_{j=1}^{J} \chi_{\mathcal{M}_j}(x_n) \gamma_j^{-1} x_n  \right).$ Similarly, $\sum_{j=1}^{J} \chi_{\mathcal{M}_j}(x_n) \gamma_j x_{n+l}' = x_{n+l}$. Finally, notice that $\chi_{\mathcal{M}_j} = \chi_{\mathcal{M}_1} \circ \gamma_j^{-1} $. Thus, combining these together with the fact that $\cup_{j=1}^{J} \mathcal{M}_j $ has full measure on $\mathcal{M}$, we have the theorem proved. \qed 
\end{proof}

\begin{remark}
The above theorem can be viewed as extension of the linear reconstruction described in section~\ref{sec:multiple_attractor} with symmetry considerations. 
\end{remark}

\begin{remark}
The above equation for $l$-step evolution is not unique. In practice, the domain of $\Phi(x)$ is not restricted on $\mathcal{M}_1$, e.g., polynomial basis. Indeed, if we remove the restriction on the domain of $\Phi(x)$ being $\mathcal{M}_1$, we can have the following alternative equation to Eq.~\ref{eq:decompose}, 
\begin{equation}
\label{eq:decompose_2}
x_{n+l} = \sum_{j=1}^{J} \chi_{\mathcal{M}_1}(\gamma_j^{-1} x_n) \gamma_j C K^l \Phi \left( \gamma_j^{-1} x_n  \right).
\end{equation}
\end{remark}

\begin{remark}
From Eq.~\ref{eq:decompose_2}, the dimension of the lifted function space for a global reconstruction of the nonlinear dynamics is \emph{at most} $JD$ where $D$ is the number of components in $\Phi(x)$. If for any $j\in \{1,\ldots,J\}$, $\gamma_{j}$ commutes with $CK^{l}$, then we have 
\begin{align*}
\sum_{j=1}^{J}  \chi_{\mathcal{M}_1}(\gamma_{j}^{-1} x_n) \gamma_{j} C K^l \Phi \left( \gamma_{j}^{-1} x_n  \right) &= C K^l \tilde{\Phi}(x_n),
\end{align*}
where $\tilde{\Phi}(x_n) \triangleq \sum_{j=1}^{J}  \chi_{\mathcal{M}_1}(\gamma_{j'}^{-1} x_n) \gamma_{j'}  \Phi \left( \gamma_{j'}^{-1} x_n  \right) $. This effectively reduce the dimension of lifted space from $JD$ to $D$.  
\end{remark}

Generally speaking for $\Gamma$-equivariant dynamical systems that do not fit into the above situation (e.g., chaotic Lorenz system with only one attractor), we can simply augment the existing training data by exploiting the fact that if a trajectory $\{x_n\}_{n=1}^{\infty}$ is generated by integrating such $\Gamma$-equivariant dynamics,  then for any $\gamma \in \Gamma$, $\{\gamma x_n\}_{n=1}^{\infty}$ is also a trajectory generated by integrating such dynamics~\cite{nandanoori2022data}. 

In the following section, we will discuss the results induced by the above claims with analytical and numerical examples.

\section{Results}
\label{sec:results}

\subsection{Impact of Symmetry on Linear Reconstruction of the Unforced Duffing Oscillator}

As shown in Fig.~\ref{fig:lifting_duffing}, the unforced duffing oscillator satisfies the conditions in Theorem~\ref{thm:1}. If we choose $\mathcal{M}_1$ to be the  domain of attraction that contains the positive fixed point (then leaves $\mathcal{M}_2$ be the one that contains the left fixed point), now the only question is the number of components $D$ in $\Phi(x)$ for the invariant set $\mathcal{M}_1$. Note that the smallest dimension possible for linear reconstruction would be three, as one constant function is required to center the fixed point to the origin while two other functions correspond to the local linearized dynamics in the vicinity of the fixed point. In practice, the larger number of components in $\Phi(x)$, the better Koopman decomposition one will have, if not overfitted. 

Let's choose the $\gamma_{\rho}$ defined in Example.~\ref{ex:udo}. The key to notice is that $\gamma_\rho$ is effectively a scalar, which commutes with $CK^{l}$. Further expanding Eq.~\ref{eq:decompose_2}, we have 
\begin{align}
x_{n+l} &= \chi_{\mathcal{M}_1}(x_n) CK^l \Phi(x_n) + \chi_{\mathcal{M}_2}(x_n) \gamma_{\rho} CK^l \Phi( \gamma_{\rho}^{-1} x_n), \nonumber \\ 
&= \chi_{\mathcal{M}_1}(x_n) CK^l \Phi(x_n) - \chi_{\mathcal{M}_2}(x_n) CK^l \Phi( -x_n),\\
x_{n+l}  &= 
\begin{cases}
CK^l \Phi(x_n), \text{ if $x_n \in \mathcal{M}_1$}, \\
CK^l (-\Phi( -x_n)), \text{ if $x_n \in \mathcal{M}_2$},
\end{cases}
\end{align}
which can be further rewritten as a Koopman decomposition with only $D$ (instead of 2D) observables, 
\begin{equation}
\label{eq:duffing_compact}
x_{n+l} = CK^l \tilde{\Phi}(x_n),
\end{equation}
where $\tilde{\Phi}(x_n) \triangleq (2\chi_{\mathcal{M}_1}(x_n) - 1) \Phi((2\chi_{\mathcal{M}_1}(x_n) - 1)x_n)$.

\subsection{Exploiting symmetry improves generalization performance}

Inspired by the results presented in this work by far, we want to answer the following questions about exploiting symmetry in learning the Koopman operator: 
\begin{itemize}
    \item Can the efficiency of learning Koopman operators be improved by incorporating known symmetries among disjoint invariant sets, e.g., using Eq.~\ref{eq:decompose_2}? 
    \item Can the  learning process benefit from merely augmenting training data using known symmetries?
\end{itemize}

\subsubsection{Example: symmetry-constrained EDMD for unforced Duffing oscillator}

\begin{figure*}[htbp]
 \centering
 \includegraphics[width=0.6\linewidth]{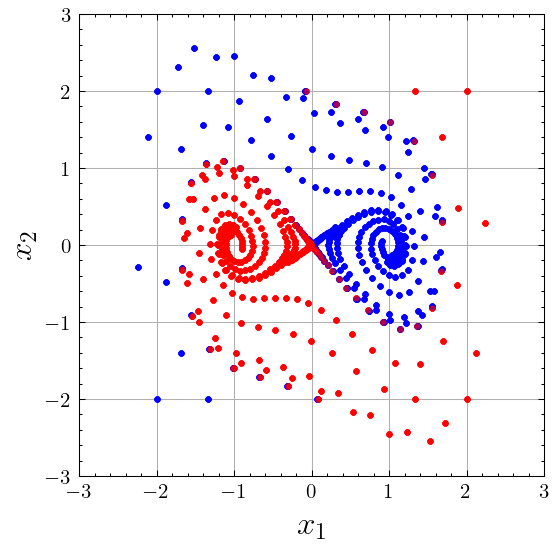}
 \caption{Training data for EDMD on the unforced Duffing oscillator. 49 trajectories for training while blue trajectories end up in the positive equilibrium and red ones in the negative equilibrium.}
 \label{fig:result_2}
\end{figure*}

For the first question, we again take the unforced Duffing oscillator in Eq.~\ref{eq:duffing_eq} as an example. We still choose $\delta=0.5,\beta=-1,\alpha=1$ for the dynamics. As shown in Fig.~\ref{fig:result_2}, here we first collect 49 trajectories with the 
second component of the initial condition $x_2$ being fixed to -2 or 2 with the first component $x_1$  uniformly distributed within -2 and 2 except the third, fourth, and fifth one along $x$-axis being changed to -0.085, -0.08, -0.075 as a refinement near the boundary of domain of attraction. We simulate each trajectory up to time 10 with time interval 0.2.

\begin{figure*}[ht]
 \centering
 \includegraphics[width=\linewidth]{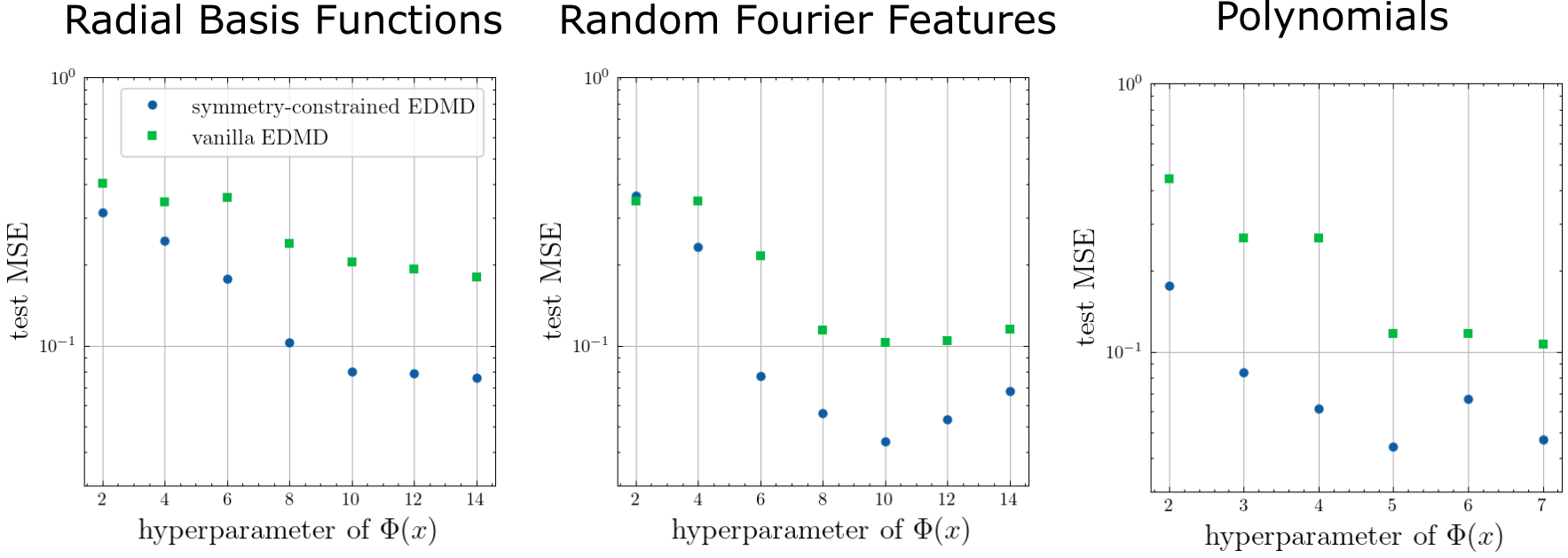}
 \caption{Comparison of generalization performance between vanilla EDMD and symmetry-constrained EDMD over a variety of observables. The comparison is based on averaged mean-squared-error over 100 unseen trajectories with initial condition uniformly distributed within the domain and performed across different choice of observables $\Phi(x)$. Hyperparameter of $\Phi(x)$ from left to right are: number of centers, number of pairs of frequencies, maximum order of polynomials.}
 \label{fig:result_3}
\end{figure*}

To use Theorem~\ref{thm:1},  the indicator function $\mathcal{M}_1$ have to be defined. Hence, we trained a classifier using a simple support vector machine, which serves as an approximation of the indicator function. Given the indicator function, we enforce the symmetry by only performing EDMD on half of the data and use the Eq.~\ref{eq:decompose_2} to extend the model to the entire phase space. We refer to this model as \emph{symmetry-constrained EDMD} in Fig.~\ref{fig:result_3}. For a fair comparison, we also performed EDMD on the entire data from the global phase space that contains \emph{twice} the number of data points compared to the symmetry-constrained case. This is referred to as \emph{vanilla EDMD} in Fig.~\ref{fig:result_3}. As presented in Fig.~\ref{fig:result_3}, the symmetry-constrained model consistently outperforms the one without explicitly enforcing symmetry across a range of hyperparameters for three different types of observable functions. We implemented both models within the \texttt{PyKoopman} framework~\cite{pan2023pykoopman}.

\subsubsection{Example: EDMD trained on symmetry-augmented data for chaotic Lorenz attractor} 

\begin{figure*}[htbp]
 \centering
 \includegraphics[width=\linewidth]{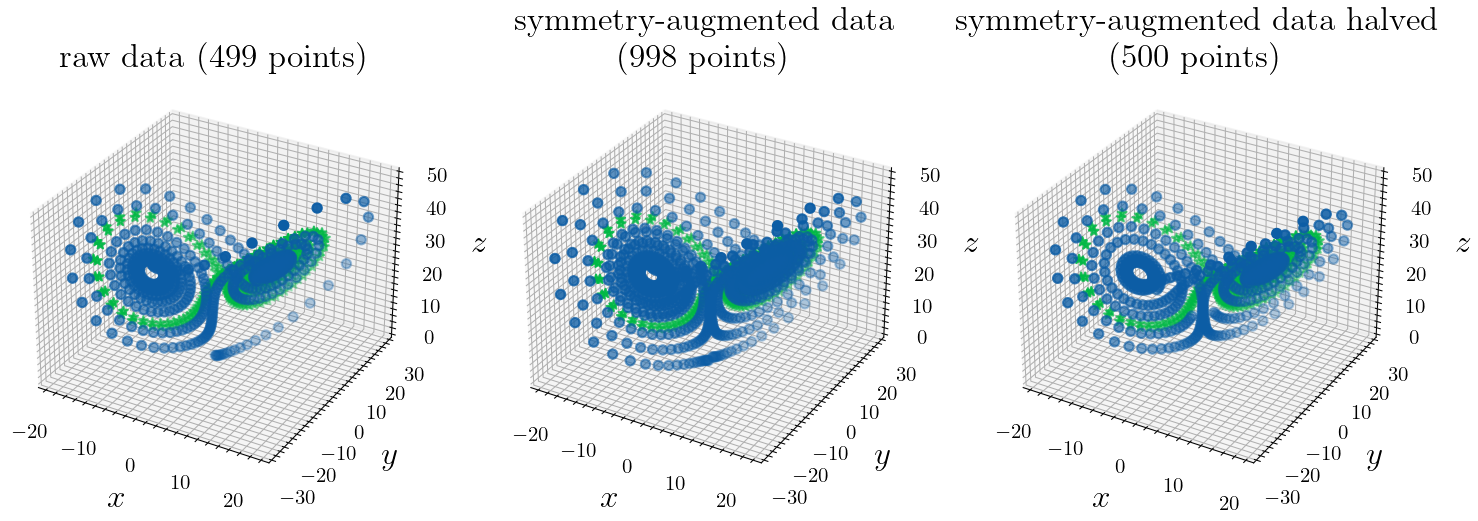}
 \caption{Distributions of three different training data of chaotic Lorenz system projected on $x-y$, $y-z$ and $x-z$ planes. \emph{raw data} refers to the original trajectory collected. \emph{symmetry-augmented data} refers to double the size of the original data by rotating the data $pi$ around $z$ axis. \emph{symmetry-augmented data halved} refers to double the size of halved original data by rotating the data $\pi$ around $z$ axis. The blue dots correspond to the training data in each case. The light green dots are the unseen testing data, which is generated by further integrating the Lorenz system continuing from the last time step of raw data.}
 \label{fig:result_lorenz_1}
\end{figure*}

For the second question, we consider the dynamics of chaotic Lorenz attractor
\begin{align}
\label{eq:lorenz}
\dot{x} &= \sigma(y - x), \\
\dot{y} &= x(\rho - z) - y, \\
\dot{z} &= xy - \beta z.
\end{align}
with $\sigma=10,\rho=28,\beta=8/3$. 
We integrate the Lorenz system starting from $(1,0,0)$ with time interval 0.005 over 2000 time steps. Then we sub-sample the trajectory a time interval of 0.02. The resulting trajectory is our \emph{raw data}. 
Unlike the previous example of Duffing oscillator, there are no separated invariant sets to exploit here but only a $\mathbb{Z}_2$-symmetry in the dynamics itself with the following matrix representation of the group action, 
\begin{equation}
    \gamma_\rho = \begin{bmatrix}
        -1 & 0 & 0 \\
        0 & -1 & 0 \\
        0 & 0 & 1 
    \end{bmatrix}.
\end{equation}
In this case, we can still augment any given trajectory data $\mathcal{D} = \{x_n\}_{n=1}^{M}$ with $\gamma_\rho$. As a result, we have $\mathcal{D}_{\text{aug}} = \{ \{x_n\}_{n=1}^{M}, \{\gamma_\rho x_n\}_{n=1}^{M}\}$, which immediately doubles the size of data. Furthermore, to make an interesting comparison, we cut the original trajectory in the middle and use $\gamma_\rho$ to augment one half of original trajectory. To summarize, we can train a separate vanilla EDMD on each of the three different datasets: (1) original data (499 points), (2) symmetry augmented data (998 points), (3) symmetry augmented on half of the original data (500 points). To generate the unseen test data, we continue to integrate the system for another 1000 time steps from the last time step of original data. 

\begin{figure*}[htbp]
 \centering
 \includegraphics[width=0.6\linewidth]{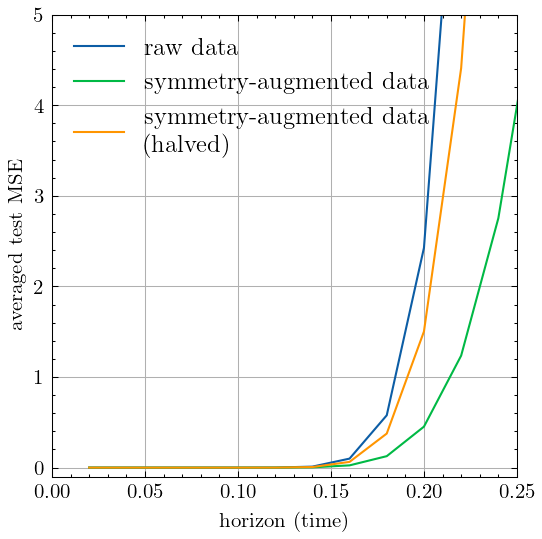}
 \caption{Generalization performance of vanilla EDMD on three different datasets from the chaotic Lorenz attractor. Since this is a chaotic system, long-term point-wise mean-squared error is less meaningful. Thus we compute the average mean-squared error conditioned in the horizon of future state prediction. Although all models work well in the short time, both two EDMD models trained on symmetry-augmented data performs better in the longer horizon.}
 \label{fig:result_lorenz_2}
\end{figure*}

As shown in Fig.~\ref{fig:result_lorenz_2}, it is not surprising that that EDMD trained on symmetry-augmented data (light green) performs the best, since it uses more training data. However, it is interesting that the EDMD trained on halved symmetry-augmented data (yellow), still generalizes better than the one trained on raw data with the same level of data points.







\section{Conclusions}
\label{sec:conclusion}

This work investigates  the role of lifting and linear reconstruction of nonlinear systems using multiple disjoint invariant sets.
Specifically, we provided an explanation on the mechanism for learning a Koopman invariant subspace of an nonlinear dynamical system with multiple disjoint invariant sets that span the system state in a \emph{weak} sense by introducing discontinuous observables. 
Furthermore, we discussed the role of symmetry  on the aforementioned mechanism. Under mild conditions, we derived a general formula for Koopman mode decomposition of nonlinear system with multiple disjoint invariant sets. Specifically, we studied the unforced Duffing oscillator and showed that one can further reduce the lifted dimensions by exploiting symmetry transformation. 
Lastly, motivated by our analysis, we performed numerical experiments that demonstrate the effectiveness of symmetry-constrained EDMD on the unforced Duffing oscillator, and the benefits of augmenting data using symmetries for a chaotic Lorenz system.

\section*{Acknowledgements}
This work was supported by AFOSR  under the grant FA9550-17-1-0195. In addition, the authors appreciate helpful comments from Igor Mezi\'{c} and Matthew Kvalheim.
  
\bibliographystyle{siamplain}
\bibliography{panlab}

\end{document}